\newcommand{\thmref}[1]{Theorem~\ref{#1}}
\newtheorem{theorem}{Theorem}[section]
\newtheorem{thm}[theorem]{Theorem}
\newtheorem{lemma}[theorem]{Lemma}
\newtheorem{conj}[theorem]{Conjecture}
\theoremstyle{definition}
\theoremstyle{remark}
\numberwithin{equation}{section}
\begin{document}

\title[Conjectures on divisors of $x^n-1$]{Some conjectures on the maximal
height of divisors of $x^n-1$}

%    Information for second author
\author{Nathan C. Ryan}
\address{Mathematics Department, Bucknell University, Lewisburg, PA 17837}
\email{nathan.ryan@bucknell.edu}

\author{Bryan C. Ward}
\email{bryan.ward@bucknell.edu}

\author{Ryan Ward}
\email{ryan.ward@bucknell.edu}

%    General info
\subjclass{12Y05, 11C08, 11Y70}

\date{February 24, 2009 and, in revised form, \today.}

%\dedicatory{This paper is dedicated to our authors.}

\keywords{cyclotomic polynomials, heights of polynomials}

\begin{abstract}
Define $B(n)$ to be the largest height of a polynomial in $\mathbb{Z}[x]$
dividing $x^n-1$.  We formulate a number of conjectures related to the
value of $B(n)$ when $n$ is of a prescribed form.  Additionally, we prove a
lower bound for $B(p^aq^b)$ where $p,q$ are distinct primes.
\end{abstract}

\maketitle

\section{Introduction}\label{sec:intro}

The height $H(f)$ of a polynomial $f$ is the largest coefficient of $f$ in
absolute value.  Let 
\[
\Phi_n(x) = \prod_{\substack{1\leq a\leq n\\(a,n)=1}}(x-e^{2\pi ia/n})
\]
be the $n$th cyclotomic polynomial.  For example, for a prime $p$, we have
$\Phi_p(x)=1+x+\cdots+x^{p-1}$. 

Define the function $A(n):= H(\Phi_n(x))$.  This function was originally
studied by Erd\"{o}s and has been much studied since then.  The following
fact reduces the study of $A(n)$ to square-free $n$:
\begin{equation}\label{eqn:basic_fact}
\Phi_{np}(x) = \frac{\Phi_n(x^p)}{\Phi_n(x)}\text{ if $p\nmid n$}\text{ and }\Phi_{np}(x) = \Phi_n(x^p) \text{ if $p\mid n$}.
\end{equation}

The variant we study in the present paper was first defined in
\cite{PomeranceRyan} and studied further in \cite{Kaplan}.  In
\cite{PomeranceRyan} the function
\[
B(n) = \max\{H(f): f|x^n-1\text{ and } f\in\mathbb{Z}[x]\}
\]
is defined and a fairly good asymptotic bound is found.  Also in
\cite{PomeranceRyan} there are two explicit formulas for $n$ of a certain
form: it is shown that $B(p^k)=1$ and $B(pq)=\min\{p,q\}$.  In the present
paper, for $n$ of a prescribed form, we are interested in finding explicit
formulas for $B(n)$, discovering bounds for $B(n)$, determining which
divisors of $x^n-1$ have height $B(n)$ and understanding the image of
$B(n)$.  One might consider the present paper a continuation of
\cite{Kaplan}.  In \cite{Kaplan}, it is shown that $B(p^2q)=\min\{p^2,q\}$.
Additionally, the author found upper bounds for $B(n)$.  Moreover, he found
a better upper bound as well as a lower bound for $B(pqr)$, where $p<q<r$
are three distinct primes.    

Our main theoretical result is a lower bound for $B(p^aq^b)$ but most of the content of the paper is various conjectures about $B(n)$ of the kind described above.  The conjectures are verified by extensive data computed in Sage \cite{Sage} and tabulated in \cite{cyclo-site}.

%Implicit in the above is that there is no obvious relationship between $B(n)$ and $B(n')$ when $n'$ is the square-free part of $n$.  In particular, there is no known formula similar to \eqref{eqn:basic_fact}.

The paper is organized in follows.  In the next section we describe our computations: the method and the scale.  The first of the subsequent two sections is about $B(n)$ for $n$ that are divisible by two distinct primes.  We give a reasonably good lower bound for such $B(n)$ and a few conjectures about $B(pq^b)$.  The third section investigates what happens when 3 or more primes divide $n$.  We conclude the paper with two further variants on the arithmetic function $B(n)$.  For the first of these two variants, related data have also been tabulated in \cite{cyclo-site}.

%\subsection{Database design}

%MySQL is a relational database which stores data in tables composed of a
%fixed number of columns and a variable number of rows.  Each column is
%given a distinct type so as to make queries or lookups on the database more
%efficient.

%

%\bryan{Should I include anything here about the length or the ratio?}
%\nathan{I don't think so}
%Because of the restriction on the number of columns a table can have, we
%have created several tables and defined relationships among different
%rows in different tables so we can map a variable number of data points to
%a single row in another table.  This is accomplished through the common
%construct called a \texttt{foreign key}.  \nathan{Describe a little more about this construct.  Why, e.g., is it called that?  What in the rest of the paragraph is the foreign key?}  This allows us to keep one table
%indexed by $n$ which stores the maximal height, and define separate tables
%which contain rows for each prime factor or set of cyclotomic polynomials
%yielding the maximal height.  Each row in the table of sets of cyclotomic
%polynomials is then linked to by another table containing a variable number
%of rows which define the set of all cyclotomic polynomials that yield this
%maximal height.

%\bryan{This obviously needs a diagram}

%Our program uses Sage the open source mathematics software~\cite{sage}, to
%compute the $B$ function.  We then interface with the MySQL server using
%Storm, an Object Relational Mapper (ORM) for Python built upon the MySQLdb
%API.

%\subsection{Contents of Database}

\section{Computations}

Much of what is included in the present paper is the result of a great deal of
machine computation.  The function $B(n)$ is very difficult to
compute.  The best way we know to compute $B(n)$ is to do the following: observe that any $f$ that
would give a maximal height is a product of cyclotomic polynomials since
\[
x^n-1 = \prod_{d|n} \Phi_d(x).
\]
So, to compute $B(n)$ we need to compute the set of divisors of $n$ and its
power set.  We then iterate over the power set, multiplying the corresponding
cyclotomic polynomials in each set.  The largest height among the
polynomials in this very long list is the value of $B(n)$.  We distributed
the computation over more than 30 processors and it took several months.  The code was
implemented in Sage \cite{Sage}.  As a result of our computations we were
able to formulate the conjectures below. 

A few words about the database that contains the results of our computations are in order.  The data in this database can be accessed via \cite{cyclo-site}.  We store all of the data we see to be useful to formulate conjectures about $B(n)$.  This includes $n$, $B(n)$, and the set of sets of cyclotomic polynomials which multiply to yield
the maximal height.

We have computed $B(n)$ for almost 300000 distinct $n$.  These
computations have taken~30 processors several months to compute (for example $B(720)$ took 113 hours to compute and $B(840)$ took 550 hours to compute) on various systems at Bucknell University: for example, many were run on  a cluster node with dual quad core 3.33ghz xeons with 64GB of ram.  We have
computed $B(n)$ for $n$ with $4$ or fewer prime factors, and for $n$ as
large as $56796482$.  In particular, we have computed $B(n)$ for every $n$ less than 1000.  We present in the next section the conjectures we have formulated based on
these computational data and freely offer access to these data
at \cite{cyclo-site}.  We note that far less comprehensive computations have been done in \cite{Abbott} and a smaller set of data can be found at \cite{oeis}.  We summarize the data in the database that we use to verify our conjectures in Tables \ref{tbl:datapq} and \ref{tbl:datapqr}.

\begin{table}
\begin{tabular}{|l l ll|}\hline
$n$ & Ranges & Data Points & Relevant Conjectures\\\hline
$p^2q^2$ & $2\leq p<q<60$ & 463 & Conjecture \ref{conj:p2q2}\\\hline
$2q^b$ & $2<q<300$, $b = 2$ & 96 & Conjecture \ref{conj:pq}\\%\hline
$2q^b$ & $2<q<100$, $b = 3$ & 24 & \\%\hline
$2q^b$ & $2<q<75$, $b = 4$ & 20 &\\%\hline
$2q^b$ & $2<q<10$, $b = 5$ & 4 &\\%\hline
$2q^b$ & $2<q<10$, $b = 6$ & 4 &\\%\hline
$2q^b$ & $q \in \{3,5\}$, $b = 7$ & 2 &\\%\hline
  $2q^b$ & $q \in \{3,5\}$, $b = 8$ & 2 &\\\hline
$pq^b$ & $2<p<q<85$, $b=3$ & 301 &Conjecture \ref{conj:pq}\\
$pq^b$& $2<p<q<35$, $b=4$ & 92 &Conjecture \ref{conj:pq-range}\\%\hline
$pq^b$& $2<p<q<15$, $b=5$ & 14 &\\%\hline
$pq^b$& $2<p<q<10$, $b=6$ & 13 &\\\hline
\end{tabular}
\caption{Data available at \cite{cyclo-site} and used in verifying the
conjectures in Section \ref{sec:pq}}
\label{tbl:datapq}
\end{table}

\begin{table}
\begin{tabular}{|l l ll|}\hline
$n$ & Ranges & Data Points &Relevant Conjecture\\\hline
$pqr$ & $2\leq p<q<r<150$ & 55530 & Conjecture \ref{conj:pqr} \\%\hline
$pqrs$ & $2\leq p<q<r<s<15$ & 1045 &  \\\hline
$pqr^b$ & $2\leq q <r<50$, $b=2$ & 1490 & Conjecture \ref{conj:pqr-range}\\%\hline
$pqr^b$ & $2\leq q<r<35$, $b=3$ & 171 & \\%\hline
$pqr^b$ & $2\leq q<r<35$, $b=4$ & 13 & \\\hline
\end{tabular}
\caption{Data available at \cite{cyclo-site} and used in verifying the
conjectures in Section \ref{sec:pqr}.}
\label{tbl:datapqr}
\end{table}

\section{When $n$ is divisible by two primes}\label{sec:pq}

Evaluation of the function $A(p^aq^b)$ is rather straightforward.  To see
that $A(p^aq^b)=1$, one can write down an explicit formula for
$\Phi_{pq}(x)$ (see, e.g., \cite{Lam}) and then use \eqref{eqn:basic_fact}.
The situation for $B(p^aq^b)$ is not all like the situation for
$A(p^aq^b)$.

The best general result we have is:
\begin{thm}\label{thm:lowerbound}
$B(p^aq^b)\geq \min\{p^a,q^b\}$
\end{thm}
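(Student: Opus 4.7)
The plan is to exhibit an explicit divisor of $x^{p^a q^b} - 1$ whose height matches the claimed bound. Without loss of generality, assume $p^a \leq q^b$, so that $\min\{p^a, q^b\} = p^a$. The natural candidate is
\[
f(x) = \frac{x^{p^a}-1}{x-1} \cdot \frac{x^{q^b}-1}{x-1} = \left(\sum_{i=0}^{p^a-1} x^i\right)\left(\sum_{j=0}^{q^b-1} x^j\right),
\]
motivated by the known case $B(pq) = \min\{p,q\}$, where the optimal divisor is precisely $\Phi_p(x)\Phi_q(x)$, i.e.\ the $a = b = 1$ instance of this construction.

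First I would verify that $f(x)$ divides $x^{p^aq^b}-1$. Using the factorization $x^m - 1 = \prod_{d \mid m} \Phi_d(x)$, write
\[
\frac{x^{p^a}-1}{x-1} = \prod_{i=1}^{a} \Phi_{p^i}(x), \qquad \frac{x^{q^b}-1}{x-1} = \prod_{j=1}^{b} \Phi_{q^j}(x).
\]
Every factor $\Phi_{p^i}$ or $\Phi_{q^j}$ appearing in $f$ is a distinct cyclotomic polynomial whose index divides $p^aq^b$, and distinct cyclotomic polynomials are pairwise coprime, so $f(x)$ divides $\prod_{d \mid p^aq^b} \Phi_d(x) = x^{p^aq^b}-1$, and $f \in \mathbb{Z}[x]$.

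Next I would compute $H(f)$. Expanding, the coefficient of $x^k$ in $f(x)$ is
\[
c_k = \#\{(i,j) : i+j = k,\ 0 \leq i \leq p^a-1,\ 0 \leq j \leq q^b-1\}.
\]
Since $p^a \leq q^b$, taking $k = p^a - 1$ allows $i$ to range freely over $\{0,1,\dots,p^a-1\}$ with $j = k-i \in \{0,\dots,p^a-1\} \subseteq \{0,\dots,q^b-1\}$, giving $c_{p^a-1} = p^a$. A short check shows no $c_k$ exceeds $p^a$ (the count is bounded by the length $p^a$ of the shorter factor). Hence $H(f) = p^a = \min\{p^a, q^b\}$, completing the proof.

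There is no real obstacle here: the construction is forced once one asks which product of $\Phi_d$'s for $d \mid p^aq^b$ most directly generalizes the $\Phi_p \Phi_q$ extremizer from the $pq$ case. The only thing one has to be slightly careful about is making sure the chosen cyclotomic factors are all distinct (so their product genuinely divides $x^{p^aq^b}-1$), which is immediate since prime powers $p^i$ and $q^j$ are all distinct divisors of $p^aq^b$.
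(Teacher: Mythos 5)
Your proposal is correct and follows essentially the same route as the paper: both construct the divisor $\left(\prod_{i=1}^{a}\Phi_{p^i}(x)\right)\left(\prod_{j=1}^{b}\Phi_{q^j}(x)\right)=\left(\sum_{c=0}^{p^a-1}x^c\right)\left(\sum_{k=0}^{q^b-1}x^k\right)$ and read off the coefficient of $x^{p^a-1}$ to get the bound $\min\{p^a,q^b\}$. The only difference is cosmetic: you additionally check that no coefficient exceeds $p^a$ (showing the height is exactly $p^a$), which the paper does not need for the lower bound.
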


Before proving the theorem, we prove the following:

\begin{lemma}
For any integer $n$ and prime $p$:
\begin{equation*}\label{assertion}
	\prod^{n}_{k=1}\Phi_{p^{k}}(x) = \sum^{p^{n}-1}_{c = 0}x^{c}
\end{equation*}
\end{lemma}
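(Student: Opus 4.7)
The plan is to identify both sides with $(x^{p^n}-1)/(x-1)$ using the standard factorization of $x^N-1$ into cyclotomic polynomials. First I would note that the right-hand side is simply the geometric series
\[
\sum_{c=0}^{p^n-1} x^c = \frac{x^{p^n}-1}{x-1}.
\]

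Next I would apply the identity $x^N - 1 = \prod_{d \mid N} \Phi_d(x)$, already used in the excerpt, with $N = p^n$. Since the divisors of $p^n$ are exactly $1, p, p^2, \ldots, p^n$, this factorization becomes
\[
x^{p^n} - 1 \;=\; \Phi_1(x) \prod_{k=1}^{n} \Phi_{p^k}(x) \;=\; (x-1)\prod_{k=1}^{n} \Phi_{p^k}(x).
\]
Dividing through by $x-1$ gives $\prod_{k=1}^{n}\Phi_{p^k}(x) = (x^{p^n}-1)/(x-1)$, which matches the geometric series on the right.

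There is no real obstacle here: the lemma is essentially an unwinding of the cyclotomic factorization over the divisor lattice of $p^n$, which is a chain. The only thing to be slightly careful about is that $\Phi_1(x) = x-1$ must be split off explicitly so that it cancels with the denominator of the geometric series; everything else is routine.
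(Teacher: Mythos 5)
Your proof is correct, but it takes a different route from the paper's. The paper proves the lemma by induction on $n$: the base case is $\Phi_p(x)=1+x+\cdots+x^{p-1}$, and the inductive step multiplies the product for $n$ by $\Phi_{p^{n+1}}(x)$, which by the identity \eqref{eqn:basic_fact} equals $\Phi_p(x^{p^n})=1+x^{p^n}+\cdots+x^{(p-1)p^n}$; multiplying this by $\sum_{c=0}^{p^n-1}x^c$ regroups into $\sum_{c=0}^{p^{n+1}-1}x^c$. You instead go through the global factorization $x^{p^n}-1=\prod_{d\mid p^n}\Phi_d(x)$, observe that the divisor lattice of $p^n$ is the chain $1,p,\ldots,p^n$, peel off $\Phi_1(x)=x-1$, and divide to land on the geometric series. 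Both arguments are complete and rest on facts already quoted in the paper (your factorization appears in Section 2; the paper's recursion is \eqref{eqn:basic_fact}). Your version is arguably shorter and avoids induction entirely; the paper's version keeps the computation local to the recursion \eqref{eqn:basic_fact}, which is the tool it leans on elsewhere when reducing questions about $A(n)$ to square-free $n$. Either proof would serve.
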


\begin{proof}
The lemma follows from induction on $n$ and an application of \eqref{eqn:basic_fact} to $\Phi_{p^{n+1}}(x)$.
\end{proof}

%Theorem: $B(p^{a}q^{b}) \geq min\{p^{a},q^{b}\}$.\\
\begin{proof}[Proof of \thmref{thm:lowerbound}]  Consider the following polynomial, a divisor of $x^{p^{a}q^{b}} - 1$:

\begin{equation}\label{product}
	\left(\prod^{a}_{i=1}\Phi_{p^{i}}(x)\right)\left(\prod^{b}_{j=1}\Phi_{q^{j}}(x)\right) = \left(\sum^{p^{a}-1}_{c = 0}x^{c}\right)\left(\sum^{q^{b}-1}_{k = 0}x^{k}\right),
\end{equation}
where the equality follows from the lemma.

 %We consider how the coefficient of
%\eqref{product} of $x^{p^a-1}$. %The polynomial
%\begin{equation*}\label{poly1}\sum^{p^{a}-1}_{c = 0}x^{c}\end{equation*}
%has coefficient $1$ for all monomials $x^i$ where $i\in\{0,\ldots,p^{a-1}\}$. Also, the
%polynomial
%\begin{equation*}\label{poly2}\sum^{q^{b}-1}_{k = 0}x^{k}\end{equation*}
%has coefficient $1$ for all monomials $x^k$ for $k\in \{0,\ldots,p^{a-1}\}$. %here we use the fact that $p^a\leq $q^b$  

Assume that $p^{a} < q^{b}$.  The coefficient
of $x^{p^a-1}$ in \eqref{product} can be seen to be
\[
\sum_{k = 0}^{p^a-1} (1)(1)
\]
since each polynomial in the product has all its coefficients equal to 1.
%the same as the number of of
%possible combinations of exponents from \eqref{poly1} and \eqref{poly2}
%that sum to $p^{a} - 1$ because all the coefficients in \eqref{poly1} and
%\eqref{poly2} are $1$. Since $p^{a}$ combinations are achieved by adding
%elements in $\{0,\ldots,p^{a-1}\}$ and $\{0,\ldots,p^{a-1}\}$ in pairs, the
%coefficient of \eqref{product} with degree $p^{a} - 1$ is $p^{a}$.
Thus:
\begin{equation*}
	B(p^{a}q^{b}) \geq H\left(\left(\prod^{a}_{i=1}\Phi_{p^{i}}(x)\right)\left(\prod^{b}_{j=1}\Phi_{q^{j}}(x)\right)\right) \geq p^{a}.
\end{equation*}
A similiar argument can show, that if $q^{b} < p^{a}$, then:
\begin{equation*}
	B(p^{a}q^{b}) \geq H\left(\left(\prod^{a}_{i=1}\Phi_{p^{i}}(x)\right)\left(\prod^{b}_{j=1}\Phi_{q^{j}}(x)\right)\right) \geq q^{b}.
\end{equation*}

Hence:
\begin{equation*}B(p^{a}q^{b}) \geq \min\{p^{a},q^{b}\}\end{equation*}
\end{proof}

We observe that this bound is surprisingly good for the data we have
computed.  Of the 5396 $n$ in the database of the form $p^aq^b$ (for
$(a,b)\not\in \{(1,1),(1,2),(2,1)\}$), $B(n) =\min\{p^a,q^b\}$ a majority
of the time.

By means of a thorough case-by-case analysis, one can find an explicit
formula for $B(pq^2)$ \cite[Theorem 6]{Kaplan} where $p$ and $q$ are
distinct primes.  The proof proceeds by computing the height of every
possible divisor of $x^{pq^2}-1$ and identifying which of those is largest.
In that spirit we make the note of the following:
\begin{conj}\label{conj:p2q2}
Let $p<q$ be primes.  Then $B(p^2q^2)$ is the larger of
$H(\Phi_p(x)\Phi_q(x)\Phi_{p^2q}(x)\Phi_{pq^2}(x))$ and
$H(\Phi_p(x)\Phi_q(x)\Phi_{p^2}(x)\Phi_{q^2}(x))$.
\end{conj}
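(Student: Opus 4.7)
The plan is to mimic the case-by-case approach used by Kaplan for $B(pq^2)$, but now applied to the divisor lattice of $p^2q^2$. The divisors of $p^2q^2$ are the nine numbers $1, p, p^2, q, q^2, pq, p^2q, pq^2, p^2q^2$, so every divisor of $x^{p^2q^2}-1$ in $\mathbb{Z}[x]$ has the form $f_S = \prod_{d\in S}\Phi_d(x)$ for some $S\subseteq\{1,p,p^2,q,q^2,pq,p^2q,pq^2,p^2q^2\}$. The proof would enumerate, with suitable groupings, the $2^9=512$ subsets $S$ and show that the height of $f_S$ never exceeds $\max\{H(f_A),H(f_B)\}$, where $A=\{p,q,p^2q,pq^2\}$ and $B=\{p,q,p^2,q^2\}$ are the two conjectured maximizing subsets.

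First I would reduce the number of cases using structural observations. The polynomial $\Phi_1(x)=x-1$ has $\pm1$ coefficients, and multiplying by it can at most double the height while changing sign patterns; a short lemma should show it is never strictly advantageous to include $1\in S$, reducing to $2^8$ cases. Next, using \eqref{eqn:basic_fact} iteratively, rewrite $\Phi_{p^iq^j}(x) = \Phi_{pq}(x^{p^{i-1}q^{j-1}})$ for $i,j\geq 1$, so that the four ``large'' factors $\Phi_{pq},\Phi_{p^2q},\Phi_{pq^2},\Phi_{p^2q^2}$ are all expressible as substitutions of $\Phi_{pq}(x)$, whose coefficients are known (from Lam-Leung or Lenstra) to lie in $\{-1,0,1\}$. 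Similarly $\Phi_{p^i}$ and $\Phi_{q^j}$ factor as $\Phi_p(x^{p^{i-1}})$, $\Phi_q(x^{q^{j-1}})$, and the lemma proved in the excerpt tells us exactly how products of these cumulate: $\prod_{i=1}^k\Phi_{p^i}(x) = 1+x+\cdots+x^{p^k-1}$. These reductions split the analysis into manageable subcases indexed by which of the four $\Phi_{p^iq^j}$ factors appear.

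For each of the $2^4=16$ choices of which large factors appear, one must maximize over the $2^4=16$ choices of which prime-power factors appear. Using the substitution structure, I would express the product as a polynomial in ``blocks'' of length $p$, $q$, $pq$, $p^2$, etc., and count for each exponent $m$ the number of representations contributing to the coefficient of $x^m$, bounding this by careful application of the triangle inequality combined with the $\{-1,0,1\}$ property of $\Phi_{pq}$ and its substitutions. For the subcases containing no large factor, the lemma immediately gives $H(f_S)\le p^2=H(f_B)$; for those containing all four, a direct degree/support count should force $H(f_S)\le H(f_A)$; the intermediate subcases are the delicate ones.

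The main obstacle will be precisely these intermediate cases, in particular products like $\Phi_p\Phi_q\Phi_{pq}\Phi_{p^2q^2}$ or $\Phi_{p^2}\Phi_{q^2}\Phi_{p^2q}\Phi_{pq^2}$, where constructive interference of the $\pm 1$ coefficients is possible. The existing literature (Kaplan, Bachman, Bzd\c{e}ga) only controls heights of products of two such cyclotomics in special ranges, so new estimates would be required — most likely a sharpened version of Bachman's lifting technique adapted to three- or four-fold products $\Phi_{pq}(x^{\alpha_1})\cdots\Phi_{pq}(x^{\alpha_k})$. If such a uniform bound of the form $H\bigl(\prod_k \Phi_{pq}(x^{\alpha_k})\bigr) \le \max\{H(f_A),H(f_B)\}$ can be established in these remaining configurations, the conjecture follows by assembling the cases; if not, numerical evidence from \cite{cyclo-site} would have to guide which additional subset(s) might overtake $f_A$ or $f_B$ and refine the statement.
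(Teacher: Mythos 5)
This statement is a conjecture, not a theorem: the authors state explicitly that they have no proof and only verify it computationally for the primes in Table~\ref{tbl:datapq}, so there is no proof in the paper to compare against. Your proposal does not close that gap. It is a program rather than a proof: after the (reasonable) reductions via \eqref{eqn:basic_fact} and the lemma on $\prod_{i=1}^k\Phi_{p^i}$, everything hinges on bounding the heights of the ``intermediate'' products containing several factors of the form $\Phi_{pq}(x^{\alpha})$, and you concede that no such bound exists in the literature and that one ``would be required.'' You even allow that the bound might fail and the statement might need to be refined. That conditional step is precisely the content of the conjecture, so nothing has been proved; what you have written is a plausible strategy for attacking it, consistent with the case-analysis approach Kaplan used for $B(pq^2)$, but with the decisive estimates missing.

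Two further points deserve care if you pursue this. First, your ``short lemma'' that including $\Phi_1$ in $S$ is never advantageous is asserted without argument, and it is not harmless: multiplying by $x-1$ replaces coefficients $a_k$ by $a_{k-1}-a_k$ and can double the height, and indeed Conjecture~\ref{conj:pqr} of this paper puts $\Phi_1$ into one of the two candidate maximizers for squarefree $n$. Any honest case analysis must either prove this reduction for $n=p^2q^2$ or keep the $2^9$ subsets. Second, the count of cases is the lesser problem; the real difficulty, which the authors flag when they say even conjecturing formulas is hard here, is that the heights $H(f_A)$ and $H(f_B)$ themselves have no known closed form, so the comparisons you propose cannot currently be carried out symbolically in $p$ and $q$.
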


For example, $B(3^2\cdot 5^2) = H(\Phi_{3}\Phi_{5}\Phi_{3^2\cdot 5}\Phi_{3\cdot
5^2}) \neq H(\Phi_{3}\Phi_{5}\Phi_{3^2}\Phi_{5^2})$ and
$B(5^2\cdot 11^2) = H(\Phi_{5}\Phi_{11}\Phi_{5^2}\Phi_{11^2}) \neq
H(\Phi_{5}\Phi_{11}\Phi_{5^2\cdot 11}\Phi_{5\cdot 11^2})$.

In addition to not having a proof for this conjecture, we also lack an
explicit formula for the height of the polynomial.  The conjecture has been
checked for the primes indicated in Table \ref{tbl:datapq}.

An even more difficult problem is to deduce a formula for $n$ of a more
arbitrary form.  For example, our computations suggest the following
conjecture.
\begin{conj}\label{conj:pq}
Let $p<q$ be odd primes.
\begin{enumerate}
\item For any positive integer $b$, $B(2q^b)=2$.
\item Suppose $b>2$.  Then $B(pq^b)>p$.
\end{enumerate}
\end{conj}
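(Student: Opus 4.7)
The plan is to prove the two parts separately. For Part (1), \thmref{thm:lowerbound} already gives $B(2q^b) \geq \min\{2, q^b\} = 2$ since $q$ is odd, so the content lies in the upper bound $B(2q^b) \leq 2$. Every divisor of $x^{2q^b} - 1$ in $\mathbb{Z}[x]$ is a product of distinct cyclotomic polynomials $\Phi_d(x)$ with $d \mid 2q^b$, and such a divisor $f$ factors as $f = u \cdot v$ where $u$ collects the odd-indexed factors (so $u \mid x^{q^b} - 1$) and $v$ collects the even-indexed factors (so $v \mid x^{q^b} + 1$). The Pomerance--Ryan result $B(q^b) = 1$ gives $H(u) \leq 1$; and using $\Phi_{2d}(x) = \Phi_d(-x)$ for odd $d > 1$ (together with $\Phi_2(x) = -\Phi_1(-x)$), one may write $v(x) = \pm w(-x)$ for some $w \mid x^{q^b} - 1$, so $H(v) \leq 1$ as well. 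The upper bound thus reduces to showing
\[
H(u(x)\, w(-x)) \leq 2 \quad \text{whenever} \quad u, w \mid x^{q^b} - 1.
\]

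To handle this, I would split on whether $\Phi_1(x) = x - 1$ divides $u$ or $w$. If $\Phi_1$ divides neither, set $u = \prod_{i \in S}\Phi_{q^i}(x)$ and $w = \prod_{j \in T}\Phi_{q^j}(x)$ with $S, T \subseteq \{1,\ldots,b\}$, and use the identity $\Phi_{q^i}(x)\Phi_{2q^i}(x) = \Phi_{q^i}(x^2)$ to rewrite
\[
u(x)\, w(-x) = \Bigl(\prod_{i \in S \cap T}\Phi_{q^i}(x^2)\Bigr) \cdot \Bigl(\prod_{i \in S \setminus T}\Phi_{q^i}(x)\Bigr)\Bigl(\prod_{j \in T \setminus S}\Phi_{2q^j}(x)\Bigr).
\]
Applying the lemma stated before \thmref{thm:lowerbound} iteratively, each factor in this product has coefficients in $\{-1, 0, 1\}$ with a combinatorial support description in terms of allowed base-$q$ digit positions. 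The goal in this case is to use the disjointness of the positions corresponding to $S \cap T$ and $S \triangle T$ to show $H(u(x) w(-x)) \leq 1$. The two remaining cases are easier: if $\Phi_1$ divides only one of $u, w$, pull out a factor of $x - 1$ and express the product as a shifted difference of two height-one polynomials, yielding height $\leq 2$; if $\Phi_1$ divides both, factor out $(x-1)(-x-1) = -(x^2 - 1)$ and argue the same way. The main obstacle is the base-$q$ analysis in the generic case: since $q$ is odd, doubling a digit $\alpha_i \in \{0, \ldots, q-1\}$ can produce a carry to the next higher position, and these carries may land in positions indexed by $S \triangle T$, so verifying uniqueness of the pair of contributing support elements at each exponent requires care.

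For Part (2), the strategy is to produce an explicit divisor of $x^{pq^b} - 1$ of height exceeding $p$. Guided by \conjref{conj:p2q2} and the divisor used to prove \thmref{thm:lowerbound}, a natural candidate is
\[
f(x) = \Phi_p(x) \cdot \prod_{i=1}^{b}\Phi_{q^i}(x) \cdot \Phi_{pq^j}(x)
\]
for a suitable $j$ in $\{1, \ldots, b-1\}$ (likely near $b-1$), augmenting the height-$p$ divisor from the proof of \thmref{thm:lowerbound} by one additional cyclotomic factor. I would then try to extract a specific coefficient of $f$, such as the one at $x^{p-1}$ or near $x^{pq^{j-1}}$, and show that for $b \geq 3$ this coefficient strictly exceeds $p$. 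The primary obstacle is combinatorial: identifying the correct auxiliary factor $\Phi_{pq^j}$ (the data summarized in Table~\ref{tbl:datapq} should narrow this down) and then analyzing the resulting convolution with sufficient precision, which will likely require a case analysis in the spirit of \cite[Theorem~6]{Kaplan}.
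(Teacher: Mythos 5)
The statement you are trying to prove is stated in the paper as a \emph{conjecture}: the authors give no proof of either part, only computational verification over the ranges in Table~\ref{tbl:datapq}, and they note that the only proved cases are $b=1$ and $b=2$ of part (1), which are theorems of \cite{PomeranceRyan} and \cite{Kaplan} respectively. So there is no proof in the paper to compare against, and your proposal should be judged as an attempt at an open problem. Judged that way, it is a reasonable plan but not a proof, and the gaps sit exactly where the difficulty of the conjecture lives. For part (1), your reduction is correct and worth having: every divisor of $x^{2q^b}-1$ is $\pm u(x)w(-x)$ with $u,w\mid x^{q^b}-1$, via $\Phi_{2d}(x)=\Phi_d(-x)$ for odd $d>1$ and $\Phi_2(x)=-\Phi_1(-x)$, and peeling off the possible $\Phi_1$ factors correctly isolates the ``generic'' claim $H\bigl(u_0(x)w_0(-x)\bigr)\le 1$ for $u_0,w_0$ products of the $\Phi_{q^i}$ with $i\ge 1$. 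But that claim is the entire content of the upper bound, and you do not prove it: the base-$q$ support analysis runs into exactly the carry problem you name (the exponents $2\alpha q^{i-1}$ coming from $\Phi_{q^i}(x^2)$ have $2\alpha$ up to $2(q-1)>q-1$, so the natural digit representation is not unique), and the mixed case where both $S\setminus T$ and $T\setminus S$ are nonempty additionally carries signs from the $\Phi_{2q^j}$ factors. Nothing you write resolves this, so the upper bound remains open.

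Part (2) is weaker still: you propose a candidate divisor $\Phi_p\cdot\prod_{i=1}^b\Phi_{q^i}\cdot\Phi_{pq^j}$ without specifying $j$, without computing any coefficient of it, and without any argument that its height exceeds $p$ for $b>2$ (note that without the extra factor the height is exactly $p$ by the proof of \thmref{thm:lowerbound}, so everything depends on the effect of $\Phi_{pq^j}$, which you do not analyze). Also be careful that part (2) needs a \emph{strict} inequality for every pair $p<q$ of odd primes and every $b>2$; a single well-chosen divisor may well work, but identifying it and estimating its coefficients is precisely the case-by-case analysis the authors describe as infeasible for general $b$. In short: part (1) contains a genuine and correct reduction that could be a useful first step, but both parts stop short of the decisive combinatorial arguments, so the conjecture is not proved by this proposal.
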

The difficulty here is that it is not feasible to do a case by case
analysis as described above. 

We have computed data verifying the first part of the conjecture as
indicated in Table \ref{tbl:datapq}.  The cases $b=1$ and $b=2$ in the
first part are theorems in \cite{PomeranceRyan} and \cite{Kaplan},
respectively.  We have verified the second half of the conjecture as
indicated in Table \ref{tbl:datapq}.

The previous conjectures deals with what values of $B(pq^b)$ you get when
you have two fixed primes and let one of the exponents vary.  A related
question is what happens when you have one fixed prime and two fixed
exponents.

\begin{thm}\label{thm:range}
Fix a prime $p$ and positive integers $a$ and $b$.  Then $B(p^aq^b)$ takes
on only finitely many values as $q$ ranges through the set of primes.
\end{thm}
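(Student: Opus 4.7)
The plan is a two-step reduction: first, express $B(p^a q^b)$ as a maximum over a $q$-independent number of polynomial products; second, bound the height of each such product uniformly in $q$.

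Since $x^n - 1 = \prod_{d \mid n} \Phi_d(x)$ with each $\Phi_d$ irreducible over $\mathbb{Q}$, every $\mathbb{Z}[x]$-divisor of $x^n - 1$ is, up to sign, of the form $f_S(x) = \prod_{d \in S} \Phi_d(x)$ for some subset $S$ of the divisors of $n$. For $n = p^a q^b$ there are exactly $(a+1)(b+1)$ such divisors, hence at most $2^{(a+1)(b+1)}$ possible products $f_S$, a count independent of $q$. Writing $f_S^{(q)}(x) := \prod_{(i,j) \in S} \Phi_{p^i q^j}(x)$ for each subset $S$ of the abstract index set $\{(i,j) : 0 \le i \le a,\ 0 \le j \le b\}$, one has $B(p^a q^b) = \max_S H(f_S^{(q)})$. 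Since $B(p^a q^b) \in \mathbb{Z}_{>0}$, the theorem reduces to showing that for each fixed $S$ the height $H(f_S^{(q)})$ is bounded uniformly in the prime $q \neq p$.

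For the uniform bound, I would apply \eqref{eqn:basic_fact} iteratively to obtain
\[
\Phi_{p^i q^j}(x) = \Phi_{pq}(x^{p^{i-1} q^{j-1}}),\quad \Phi_{p^i}(x) = \Phi_p(x^{p^{i-1}}),\quad \Phi_{q^j}(x) = \Phi_q(x^{q^{j-1}}),
\]
so each factor of $f_S^{(q)}$ has coefficients in $\{-1, 0, 1\}$. Equivalently, by M\"obius inversion one can write $f_S^{(q)}(x) = \prod_{e \mid p^a q^b} (x^e - 1)^{\alpha_e(S)}$, where
\[
\alpha_e(S) := \sum_{\substack{d \in S \\ e \mid d}} \mu(d/e)
\]
satisfies $|\alpha_e(S)| \le 4$ (since $\mu(d/e) \ne 0$ requires $d/e \in \{1, p, q, pq\}$) and depends only on $S$, not on $q$. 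Grouping factors by $q$-scale $q^{j-1}$ and applying the elementary identity $H(f(x) \cdot g(x^m)) = H(f) \cdot H(g)$ whenever $\deg f < m$, after first combining cancelling factors with negative $\alpha_e$, bounds $H(f_S^{(q)})$ by a product of heights of polynomials whose shapes depend only on $S$, $p$, $a$, and $b$.

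The main obstacle is making the scale-separation precise: one must combine factors with negative M\"obius weights into honest polynomials before the peeling identity applies, and verify that partial-product degrees stay below the appropriate $q^{j-1}$ thresholds. For the finitely many exceptional small primes $q \le q_0(p, a, b)$ where these thresholds are not met, each $H(f_S^{(q)})$ is individually a single finite integer, contributing only finitely many extra values. Combining the uniform bound for large $q$ with these finitely many exceptional values and taking the maximum over the $2^{(a+1)(b+1)}$ subsets $S$ gives the finiteness of the image of $B(p^a q^b)$.
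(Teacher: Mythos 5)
Your overall strategy is the right one, and it is essentially the strategy underlying the result the paper invokes (the paper's entire proof of this theorem is a citation to Kaplan's Theorem 4): reduce to the $2^{(a+1)(b+1)}$ abstract subsets $S$ of the exponent grid, then show that for each fixed $S$ the height $H(f_S^{(q)})$ is bounded uniformly in the prime $q$, whence the positive integer $B(p^aq^b)$ is bounded and takes finitely many values. The reduction to finitely many products, the M\"obius-inversion bookkeeping, and the observation that the exponents $\alpha_e(S)$ depend only on the abstract position of $e$ in the grid are all correct (as an aside, the sharp bound is $|\alpha_e(S)|\le 2$, since the four relevant M\"obius values are $+1,-1,-1,+1$).

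The gap is that the uniform bound on $H(f_S^{(q)})$ --- which is the entire content of the theorem --- is not actually established. The peeling identity $H(f(x)g(x^m))=H(f)H(g)$ requires both factors to be honest polynomials with $\deg f<m$, and your grouping does not produce such factors: the scale-$j$ piece $\prod_i(x^{p^iq^j}-1)^{\alpha_{i,j}}$ is in general not a polynomial. Already for $f_S=\Phi_{pq}$ the scale-$0$ piece is $(x-1)/(x^p-1)$ and there is nothing else at scale $0$ to cancel the denominator; cancelling it against the scale-$1$ factor $x^{pq}-1$ produces a polynomial of degree comparable to $q$, destroying the separation of scales. If instead you expand the negative-exponent factors as power series, a factor such as $(x-1)^{-2}$ (which does occur: $\alpha_{1}(S)=-2$ for $f_S=\Phi_p\Phi_q$) has linearly growing coefficients, and since $\deg f_S^{(q)}$ grows with $q$, truncating gives no $q$-independent bound without a genuine analysis of the cancellation --- note that $H(\Phi_p\Phi_q)=\min\{p,q\}$ only because of exactly such cancellation. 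You explicitly flag this as ``the main obstacle'' but do not overcome it, and overcoming it is precisely what the cited theorem of Kaplan supplies. As written, the proposal is a correct and useful reduction followed by an unproved key lemma, so it does not yet constitute a proof.
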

\begin{proof}
This is a rephrasing of a special case of \cite[Theorem 4]{Kaplan}.
\end{proof}

As a result of investigating this theorem computationally, we make the
following observation:

\begin{conj}\label{conj:pq-range}

For a fixed odd prime $p$ and fixed positive integer $b$, the finite list
of values $B(p q^b)$ as $p<q$ varies are all divisible by $p$.
\end{conj}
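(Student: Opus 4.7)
The plan is to argue modulo $p$. Any divisor of $x^{pq^b}-1$ has the form
\[
f(x) = (x-1)^{e}\,\Phi_{p}(x)^{d_{0}} \prod_{j=1}^{b} \Phi_{q^{j}}(x)^{c_{j}}\, \Phi_{pq^{j}}(x)^{d_{j}}
\]
with $e, c_j, d_j \in \{0,1\}$. From \eqref{eqn:basic_fact} one obtains $\Phi_{pq^{j}}(x) \equiv \Phi_{q^{j}}(x)^{p-1} \pmod{p}$ and $\Phi_{p}(x) \equiv (x-1)^{p-1} \pmod{p}$, so
\[
f(x) \equiv (x-1)^{e+(p-1)d_{0}} \prod_{j=1}^{b} \Phi_{q^{j}}(x)^{c_{j}+(p-1)d_{j}} \pmod{p}.
\]
It would suffice to show that whenever $f$ realizes the maximal height, the position $n^{\ast}$ at which $|[x^{n^{\ast}}] f| = H(f)$ also satisfies $[x^{n^{\ast}}] f \equiv 0 \pmod{p}$.

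To carry this out I would first invoke the structural analysis of Theorem~4 of \cite{Kaplan} (which underlies \thmref{thm:range}) to reduce, for each fixed $p$ and $b$, to a finite list of candidate exponent patterns $(e, c_\bullet, d_\bullet)$ that could conceivably maximize the height. For each such pattern, one can analyze the coefficients of $f$ by exploiting the substitutions $\Phi_{q^{j}}(x) = \Phi_{q}(x^{q^{j-1}})$ and $\Phi_{pq^{j}}(x) = \Phi_{pq}(x^{q^{j-1}})$: for $q$ large compared to $p$ and $b$, the product $f$ decomposes into sparse ``blocks'' indexed by the base-$q$ digits of the exponent, and each coefficient becomes a convolution of the Lam--Leung coefficients of $\Phi_{pq}$ (which lie in $\{-1,0,1\}$) with the $\{0,1\}$ coefficients coming from the $\Phi_{q^{j}}$ factors.

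The main obstacle is locating, in each candidate pattern, the position $n^{\ast}$ of the maximal coefficient and verifying that it reduces to $0$ modulo $p$. A cleaner route would be to first prove that every maximizer is divisible by $\Phi_{p}(x)$, thereby reducing the problem to controlling $\Phi_{p}(x)\,g(x)$ for a product $g$ of $\Phi_{q^j}$'s and $\Phi_{pq^j}$'s. Since $[x^{n}]\bigl(\Phi_p(x)\, g(x)\bigr) = \sum_{i=0}^{p-1} [x^{n-i}] g(x)$, any coefficient of $\Phi_{p}\cdot g$ arising from a window of $p$ consecutive equal coefficients of $g$ is automatically a multiple of $p$; the sparse, highly structured $g$'s that arise here are expected (and, empirically, observed) to exhibit such windows at their peak positions. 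Making this heuristic precise is the crux of the difficulty, and is closely tied to the broader open question of explicitly describing maximizing divisors---a problem that already obstructs \conjref{conj:p2q2} and \conjref{conj:pq}.
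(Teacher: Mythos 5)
This statement is one of the paper's conjectures: the authors give no proof of it at all, only computational verification over the ranges in Table~\ref{tbl:datapq}, together with the cautionary examples $B(7^2\cdot 83^2)=64$ and $B(3\cdot 31\cdot 1009)=599$ showing that the analogous claims for $p^2q^2$ and for squarefree $pqr$ are false. So there is no paper proof to compare against, and your proposal does not supply one either: it is a plan whose decisive step you yourself flag as open. Your preliminary reductions are sound --- every monic divisor of $x^{pq^b}-1$ is a product of a subset of $\{\Phi_d : d\mid pq^b\}$, the congruences $\Phi_{pq^j}(x)\equiv \Phi_{q^j}(x)^{p-1}\pmod p$ and $\Phi_p(x)\equiv (x-1)^{p-1}\pmod p$ follow from \eqref{eqn:basic_fact} and Frobenius, and it is true that $p\mid B(pq^b)$ would follow if the coefficient of a maximizing divisor at its peak position $n^\ast$ were shown to vanish mod $p$. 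But nothing in the proposal locates $n^\ast$ or controls the residue there: knowing $f\bmod p$ as a product does not tell you the residue of the largest coefficient unless you can say where that coefficient sits, and the alternative route rests on two unproved assertions, namely that every maximizer is divisible by $\Phi_p(x)$ and that the cofactor $g$ is constant on a window of $p$ consecutive coefficients at the peak.

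The gap is not merely technical. Any argument of the ``flat window at the peak'' type must be sharp enough to \emph{fail} for $n=p^2q^2$ (where $\Phi_{p^2}(x)=\Phi_p(x^p)$ plays the role of your $\Phi_p(x)$ and the conclusion is false, as $B(7^2\cdot 83^2)=64$ shows) and for squarefree $pqr$; a soft heuristic that does not see the difference between these cases and $pq^b$ cannot be completed. Moreover the appeal to Theorem~4 of \cite{Kaplan} only yields that the list of values is finite for fixed $p$ and $b$ (this is \thmref{thm:range}); it does not produce an explicit finite list of candidate maximizing patterns that one could check, and the paper itself emphasizes that identifying maximizing divisors is the obstruction even in the simplest open cases (\conjref{conj:p2q2}, \conjref{conj:pq}). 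In short: correct congruences and a reasonable strategy, but the conjecture remains unproved by this proposal, and you should present it as evidence and a possible line of attack rather than as a proof.
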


We have checked this for the same range as which we have checked the second
half of Corollary \ref{conj:pq}.  We observe that $B(7^283^2)=64$, showing
that the hypothesis on the factorization of $n$ as $pq^b$ is necessary.

\section{When $n$ is divisible by more than two primes}\label{sec:pqr}

As noted in \cite[p. 2687]{Kaplan}, one of the products
$\Phi_p(x)\Phi_q(x)\Phi_r(x)\Phi_{pqr}(x)$ or
$\Phi_1(x)\Phi_{pq}(x)\Phi_{pr}(x)\Phi_{qr}(x)$ appears to give the largest
height.  The majority of the time the first product gives the largest
height.  According to out data, of the 27492 $n$ of the form $pqr$ we have
computed, the vast majority of the time the first product does give the
maximal height while the second product only gives the maximal height
only around half of the time (often they both give the maximal height).  In
general, one can make the following conjecture
\begin{conj}\label{conj:pqr}
Let $n=p_1\cdots p_t$ be square free. Then $B(n)$ is given by either
\[
\prod_{d\mid n, \omega(d) \equiv 1 \pmod{2}}\Phi_d(x) \text{ or }
\prod_{d\mid n, \omega(d) \equiv 0 \pmod{2}}\Phi_d(x)
\]
where $\omega(d)$ is the number of primes dividing $d$.
\end{conj}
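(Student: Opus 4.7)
\emph{Proof strategy.} For squarefree $n=p_1\cdots p_t$, every monic divisor of $x^n-1$ in $\mathbb{Z}[x]$ has the form $f_S(x)=\prod_{d\in S}\Phi_d(x)$ for some $S\subseteq \mathcal{D}:=\{d:d\mid n\}$, since $x^n-1=\prod_{d\mid n}\Phi_d(x)$ and the $\Phi_d$ are pairwise coprime. Write $S_o=\{d\in\mathcal{D}:\omega(d)\text{ odd}\}$ and $S_e=\{d\in\mathcal{D}:\omega(d)\text{ even}\}$; these are complementary subsets satisfying $f_{S_o}\cdot f_{S_e}=x^n-1$. The conjecture asserts $B(n)=\max\!\bigl(H(f_{S_o}),\,H(f_{S_e})\bigr)$, so we need to rule out all $S\notin\{S_o,S_e\}$.

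\emph{Attempted approach.} I would induct on $t$. The base case $t=2$ is the Pomerance--Ryan formula $B(pq)=\min(p,q)=H(\Phi_p\Phi_q)=H(f_{S_o})$, with $f_{S_e}=(x-1)\Phi_{pq}$ of height at most $2$. For the inductive step with $n'=n/p_t$ and $\mathcal{D}'=\{d:d\mid n'\}$, decompose each $S\subseteq\mathcal{D}$ as $S=S^{(0)}\sqcup p_tS^{(1)}$ where $S^{(0)}=S\cap\mathcal{D}'$ and $S^{(1)}=\{d\in\mathcal{D}':p_td\in S\}$. Using $\Phi_{p_td}(x)=\Phi_d(x^{p_t})/\Phi_d(x)$ for $d\in\mathcal{D}'$, one obtains the clean identity
\[
f_S(x)\cdot f_{S^{(1)}}(x)=f_{S^{(0)}}(x)\cdot f_{S^{(1)}}(x^{p_t}).
\]
Under this parameterization, the canonical $S_o$ and $S_e$ at level $n$ correspond to the canonical subsets at level $n'$ placed in \emph{opposite} coordinates: $S_o$ gives $(S^{(0)},S^{(1)})=(S_o',S_e')$, and $S_e$ gives the swap. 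The goal is to combine the inductive hypothesis on $H(f_{S^{(0)}})$ and $H(f_{S^{(1)}})$ with the effect of the substitution $x\mapsto x^{p_t}$ (which spreads coefficients without combining them) to show that every non-canonical pair $(S^{(0)},S^{(1)})$ produces a strictly smaller height.

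\emph{Main obstacle.} The central difficulty, and presumably the reason this is stated only as a conjecture, is that the height $H$ is neither sub- nor super-multiplicative in any workable sense, so the factorization above does not translate into any inequality relating $H(f_S)$ to $H(f_{S^{(0)}})$ and $H(f_{S^{(1)}})$. An honest induction would have to track \emph{which} coefficient realizes the height in each $f_S$ and argue that only the canonical parity classes succeed in placing their extremal coefficient at a non-cancelling position. A cleaner route might be character-theoretic: the parity $\chi(d)=(-1)^{\omega(d)}$ is the natural sign on $\mathcal{D}$ compatible with the involution $d\leftrightarrow n/d$, and $S_o,S_e$ are its $\pm 1$-preimages; one would like a cancellation identity on the divisor lattice forcing non-canonical $S$ to pair, in pairs, toward the low-height polynomial $x^n-1=f_{S_o}f_{S_e}$. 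I do not see how to produce such an identity with current techniques, and I believe this is the honest status of the problem.
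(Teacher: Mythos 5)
This statement is a conjecture in the paper, not a theorem: the authors offer no proof, only computational verification over the ranges in Table \ref{tbl:datapqr}, together with the remark that the cases $t=1$ and $t=2$ follow from \cite[Lemma 2.1]{PomeranceRyan}. So there is no proof of the paper's to compare yours against, and your closing assessment --- that no known technique forces the extremal subset to be one of the two parity classes --- is exactly the honest status of the problem as the authors present it.

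That said, the verifiable parts of your sketch are sound and go somewhat beyond what the paper records. The reduction of monic integer divisors of $x^n-1$ to subsets $S$ of the divisor lattice is correct; the base case $t=2$ agrees with $B(pq)=\min\{p,q\}=H(\Phi_p\Phi_q)$, with the complementary class $(x-1)\Phi_{pq}(x)$ of height at most $2$; and your recursion $f_S(x)\,f_{S^{(1)}}(x)=f_{S^{(0)}}(x)\,f_{S^{(1)}}(x^{p_t})$ is a correct consequence of \eqref{eqn:basic_fact}, as is the observation that the two canonical parity classes swap coordinates under it. The genuine gap is the one you name yourself: the height $H$ satisfies no workable inequality across that factorization, so the induction cannot be closed, and nothing in the sketch rules out a non-canonical $S$ attaining $B(n)$. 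As written this is a strategy outline rather than a proof; it neither settles the conjecture nor conflicts with the paper, which leaves the statement open.
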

The conjecture is true when $t=1$ and $t=2$ \cite[Lemma
2.1]{PomeranceRyan}.  Our data supports the conjecture for $n$ as listed in
Table \ref{tbl:datapqr}.

For $n$ that are odd, the analogue to Conjecture \ref{conj:pq-range} would
be: $B(pqr^b)$ is divisible by $p$.  This statement is false for squarefree $n$: $B(3\cdot
31\cdot 1009)=599$ which is not divisible by 3.  On the other hand, we can make the following
conjecture: 

\begin{conj}\label{conj:pqr-range}
Let $n=pqr^b$ where $p<q<r$, and $b>1$. Then $B(n)$ is
divisible by $p$.  Moreover, $B(n)>p$.
\end{conj}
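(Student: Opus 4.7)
The plan is to treat the two claims of the conjecture separately: the strict inequality $B(n) > p$ can be established by an explicit construction, whereas the divisibility $p \mid B(n)$ appears to require a structural understanding of the maximum-height divisors.

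For the inequality, I would consider the explicit divisor
\[
f(x) = \Phi_p(x)\Phi_q(x)\prod_{k=1}^{b}\Phi_{r^k}(x)
\]
of $x^n-1$. By the lemma preceding \thmref{thm:lowerbound}, the product over $k$ equals $1 + x + \cdots + x^{r^b-1}$, while $g(x) := \Phi_p(x)\Phi_q(x)$ has degree $p+q-2$, nonnegative coefficients, and satisfies $g(1) = pq$. Multiplying $g$ by $1 + x + \cdots + x^{r^b-1}$ therefore yields a polynomial whose coefficient of $x^j$ equals $g(1) = pq$ for every $j$ with $p+q-2 \leq j \leq r^b-1$. Since $r \geq 5$ and $b \geq 2$ force $r^b \geq 25 > p+q-1$, this window is nonempty, so $H(f) = pq$ and $B(n) \geq pq > p$.

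For the divisibility claim, my plan is to mimic the case-by-case strategy of \cite{Kaplan} used to evaluate $B(pq^2)$: identify a small finite family of candidate divisors that can attain the maximum height, and then check that each candidate has height divisible by $p$. Guided by Conjecture \ref{conj:pqr} and by the empirical observation that divisors of large height typically include $\Phi_p$ as a factor, I would restrict attention to candidates of the form $\Phi_p(x)\cdot h(x)$, where $h$ is a subproduct of the remaining cyclotomic factors of $x^n-1$, possibly incorporating geometric sums $\prod_{k=1}^{b-\ell}\Phi_{r^k}(x)$ as in the lower-bound construction above. For each such candidate the goal is to produce a factor of $p$ in every coefficient, either by direct combinatorial descriptions or by the mod-$p$ identity $\Phi_p(x) \equiv (x-1)^{p-1} \pmod p$, which constrains $\Phi_p(x) h(x) \bmod p$.

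The main obstacle is the divisibility statement. Showing unconditionally that the extremal divisor takes the form $\Phi_p(x)\cdot h(x)$ seems to require an unconditional analogue of Conjecture \ref{conj:pqr}, which is itself still open. Even granted this, the heights of individual $\Phi_{pqr^k}$ are not known in closed form, so bounding products that involve them is delicate. The example $B(7^2\cdot 83^2) = 64$ recorded after Conjecture \ref{conj:pq-range} warns that the divisibility conclusion fails once $p$ or $q$ appears to higher power, so any proof must genuinely exploit the hypothesis that $p$ and $q$ occur only to the first power in $n$. I would expect the cleanest route to be: first establish a strong form of Conjecture \ref{conj:pqr} for $pqr$, then lift it to $pqr^b$ using the identity $\Phi_{sr^k}(x) = \Phi_{sr}(x^{r^{k-1}})$ (valid whenever $r \nmid s$) to reduce height computations in the $pqr^b$ problem to height computations already controlled in the $pqr$ case.
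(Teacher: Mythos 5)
First, note that the statement you are proving is presented in the paper only as a conjecture: the authors offer computational evidence (Table \ref{tbl:datapqr}) and no proof, so there is no argument of record to compare against. Judged on its own terms, your proposal does genuinely prove the second assertion. The polynomial $f(x)=\Phi_p(x)\Phi_q(x)\prod_{k=1}^{b}\Phi_{r^k}(x)$ divides $x^{pqr^b}-1$, the lemma preceding \thmref{thm:lowerbound} collapses the last factor to $1+x+\cdots+x^{r^b-1}$, and since $\Phi_p\Phi_q$ has nonnegative coefficients summing to $pq$ and degree $p+q-2$, the middle coefficients of $f$ all equal $pq$ provided $r^b-1\geq p+q-2$. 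Your justification of that proviso is slightly off --- the inequality $25>p+q-1$ fails already for $p=13$, $q=17$ --- but the correct chain $r^b\geq r^2>2r-1>p+q-1$ (using $p<q<r$) always holds, so $B(n)\geq pq>p$. This is the same device as the paper's proof of \thmref{thm:lowerbound} and is a legitimate sharpening of the lower bound in this case.

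The gap is the first and main assertion, $p\mid B(n)$, which you do not prove and for which the sketched strategy has a structural flaw. Every divisor of $x^n-1$ in $\mathbb{Z}[x]$ is, up to sign, a product of cyclotomic polynomials, hence monic with constant term $\pm 1$; consequently no candidate extremal divisor can have ``a factor of $p$ in every coefficient,'' and the reduction $\Phi_p(x)\equiv(x-1)^{p-1}\pmod p$ can never force all coefficients of $\Phi_p(x)h(x)$ into $p\mathbb{Z}$. The conjecture asserts only that the single largest coefficient is a multiple of $p$, a far more delicate statement (compare $B(pq)=\min\{p,q\}$, where the extremal divisor has height $p$ but leading coefficient $1$). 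Your plan also conditions on an unconditional form of Conjecture \ref{conj:pqr}, which is itself open, and on closed forms for $H(\Phi_{pqr^k})$ that are not known. The identity $\Phi_{sr^k}(x)=\Phi_{sr}(x^{r^{k-1}})$ for $r\nmid s$ is correct and a sensible reduction to keep, but as it stands the divisibility claim remains exactly where the paper leaves it: a conjecture supported by data, not a theorem.
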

We have evidence for this conjecture as indicated in Table
\ref{tbl:datapqr}.  This conjecture is very much analogous to
Conjecture \ref{conj:pq} and Conjecture \ref{conj:pq-range}.

%As also observed in \cite{Kaplan}, for square-free $n$ of the form $n=pq$ and $n=pqr$, the
%following holds:  a good lower bound for $B(n)$ is attained by the product 
%\[
%\prod_{\omega(d) \equiv 1 \pmod{2}}\Phi_d(x)
%\]
%where $\omega(d)$ is the number of primes dividing $d$.  In particular,
%\begin{align*}
%\Phi_{p}(x)\Phi_{q}(x) &\text{ has height } B(pq)\\
%\Phi_p(x)\Phi_q(x)\Phi_{pqr}(x) &\text{ has height } B(pqr)
%\end{align*}

%We have computational evidence to support the following conjecture:
%\begin{conj}
%Let $n=p_1\cdots p_r$ be square-free.  Then 
%\[
%\prod_{\omega(d) \equiv 1\pmod{2}}\Phi_d(x)
%\]
%has height $B(n)$.
%\end{conj}
%We have checked this for square-free $n$ less than YY with up to XX divisors. 

\section{Conclusions and Future work}

Above we have explicitly described several conjectures about the function $B(n)$.  Implicitly, we have also suggested that proving explicit formulas for $B(n)$, especially by case-by-case analysis, is extremely difficult.  In fact, even conjecturing formulas is difficult.  A new method for proving formulas will be required before more progress can be made.

In addition to the obvious task of proving any of the conjectures included here and developing a new approach to proving these formulas, we propose the following related problems: 
\begin{enumerate}
\item Define the length of a polynomial $f = \sum_{n=0}^d a_nx^n$ to be $L(f)=\sum_{n=0}^d |a_n|$ and let 
\[
C(n):=\max\{L(f):f\mid x^n-1,f\in\mathbf{Z}[x]\}; 
\]
\item let $\mathbb{Q}(\zeta_n)$ be the $n$th cyclotomic field and define the function
\[
D(n):=\max\{H(f):f\in\mathbb{Q}(\zeta_n)[x],\,f\mid x^n-1\text{ and } f \text{ monic} \}.
\]
\end{enumerate}
Can any explicit formulas or bounds be found for these functions?  The database at \cite{cyclo-site} has data related to the first of these two problems.

\bibliographystyle{amsplain}
\bibliography{cyclo}

\end{document}